\title{Exceptional p-groups of order p$^5$}
\author{Sichao Jiang}
\date{\today}
\numberwithin{equation}{section}
\newtheorem{theorem}{Theorem}
\newtheorem{proposition}{Proposition}
\begin{document}
\begin{abstract}
The mininal degree of a finite group $G$, $\mu(G)$, is defined to be the smallest natural number $n$ such that $G$ embeds inside Sym$(n)$. The group $G$ is said to be exceptional if there exists a normal subgroup $N$ such that $\mu(G/N)>\mu(G)$. We will investigate the smallest exceptional $p$-groups, when $p$ is an odd prime. In ~\cite{Lemiuex's thesis} Lemiuex showed that there are no exceptional $p$-groups of order strictly less than $p^5$ and imposed severe restrictions on the existence of exceptional groups of order $p^5$. In fact he showed that if any were to exist, they must come from central extensions of four isomorphism classes of groups of order $p^4$. Then in ~\cite{Lemiuex} he exhibited an example of an exceptional group of order $p^5$. The author demonstrates the existence of two more exceptional groups arising in such a fashion and rules out the possibility of the remaining case. 
\end{abstract}

\maketitle


\section{Introduction}
For the purposes of this paper, all groups will be assumed to be finite. The \emph{minimal degree} $\mu(G)$ of a group $G,$ is the least non-negative integer $n$ such that $G$ embeds inside $Sym(n).$ That is, it is the smallest possible faithful permutation representation of $G.$ Define two permutation representations of $G$ by $\phi:G\longrightarrow Sym(X)$ and $\psi:G\longrightarrow Sym(Y).$ We say that $\phi$ and $\psi$ are \emph{equivalent} if there exists a bijection $\theta:X\longrightarrow Y$ such that the following diagram communtes:
\begin{center}
$\begin{CD}
X @>g\phi>> X\\
@VV\theta V @VV\theta V\\
Y @>g\psi>> Y
\end{CD}$
\end{center}
which is equivalent to saying that 
$$[x(g\phi)]\theta=(x\theta)g\psi,\quad \mathrm{for\  all}\  x\in X.$$
Now let $H$ be a subgroup of $G.$ The \emph{core} of $H$ in $G$, denoted core$H,$ is the largest normal subgroup of $G$ contained in $H.$ In particular if $H\lhd G,$ then $\text{core}H=H.$ \\

Now we will make the most important construction of this section. We will define a permutation representation of $G$ by the group $Sym(G/H)$ for some subgroup $H\le G$ and show that in fact every transitive permutation representation of $G$ is equivalent to such a representation. Let $H$ be a subgroup of $G$. Let $G/H=\{Hx\ |\ x\in G\}$ denote the coset space. Define $$\phi_{H}:G\longrightarrow Sym(G/H)$$ by the rule $$g\phi_{H}:Hh\mapsto Hhg,$$ for all $h\in G$. 
Let $\phi:G\longrightarrow Sym(X)$ be a transitive permutation representation of $G$. Then $\phi$ is equivalent to $\phi_{H}$ for some subgroup $H$ of $G$. 
This identification now gives us a more practical way of computing minimal faithful permuation representations of groups. One needs to find a collection of subgroups $H_1,\ldots ,H_n$ of $G$ such that the intersection of the cores of the ${H_i}'s$ is trivial. The minimal degree is then simply given by $$\mu(G)=\text{min}\bigg\{ \ \sum_{i=1}^n|G:H_i| \quad \bigg| \quad
\bigcap_{i=1}^n \text{core}(H_i)=\{1\}\bigg\}.$$ From now on, we shall think of permutation representations of $G$ as a disjoint union of $Sym(G/H_i)$ where $i=1,2,\ldots ,n.$\\
\\
\section{Exceptional groups}
Now we define the key notion of an exceptional group. A group $G$ is said to be \emph{exceptional} if it has a normal subgroup $N$ with quotient $G/N$ such that $\mu(G)<\mu(G/N)$. In this case, $N$ is a \emph{distinguished subgroup} with $G/N$ the \emph{distinguished quotient}.
At a first glance, one may think such a definition is silly since if $H$ is a subgroup of $G$, then necessarily $\mu(H)\le\mu(G).$ However, by analogy to the example in ~\cite{Neumann}, the author found an instance where this fails in spectacular fashion. Let $G=Q_8\times Q_8\times\ldots\times Q_8={Q_8}^n$ be the product of $n$ copies of the quaternion group and let
$$N=\langle (z,z,1,\ldots ,1), (1,z,z,1,\ldots,1),\ldots , (1,\ldots, z,z)\rangle,$$
where $z\neq 1$ is central. We have $\mu(G)=n\mu(D_8)=4n.$ But then $\mu(G/N)=2^{n+1}.$ This example shows that not only is the minimal degree of the distinguished quotient potentially larger than the minimal degree of the group, but in some cases it increases exponentially with the size of the group. 
Here is a result due to Easdown and Praeger (1988) concerning the smallest exceptional groups:
\begin{proposition}
There are no exceptional groups of order strictly less than 32. The two exceptional groups of order $32$ are:
$$G=\langle x,y\ |\ x^8=y^4=1,\ x^y=x^{-1}\rangle$$\\
with minimal degree $12,$ with distinguished quotient\\
$$G/{\langle x^4y^2\rangle}\cong\langle x,y\ |\ x^8=y^4=1,\ y^2=x^4,\ x^y=x^{-1}\rangle$$\\
of minimal degree $16$ and \\
$$H=\langle x,y,n\ |\ x^8=y^4=n^2=1,\ y^2=x^4,\ x^y=x^{-1}n,\ n^x=n^y=n\rangle$$\\
with minimal degree $12,$ also with distinguished quotient\\
$$H/{\langle n\rangle}\cong\langle x,y\ |\ x^8=y^4=1,\ y^2=x^4,\ x^y=x^{-1}\rangle$$\\
of minimal degree $16$. 
\end{proposition}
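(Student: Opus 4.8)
Throughout, minimal degrees are computed from the core--index formula $\mu(K)=\min\{\sum_i[K:H_i]:\bigcap_i\operatorname{core}(H_i)=1\}$. Two things must be shown: that no group of order $<32$ is exceptional, and that of order $32$ precisely the two displayed groups are. I would base everything on two reductions. First, an abelian group is never exceptional: if $A\cong\bigoplus_iC_{p_i^{a_i}}$ in primary form then $\mu(A)=\sum_ip_i^{a_i}$, and passing to a quotient can only shrink the primary invariants, so $\mu$ does not increase. Second, by Wright's theorem $\mu$ is additive on direct products of nilpotent groups, and on direct products of groups of coprime order, which disposes of the many non-exceptional groups that split as such products. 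After these reductions only a bounded, finite amount of explicit computation remains --- but it is genuinely a computation.

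For the two groups of order $32$ the engine is the following lemma. Let $K$ be a $2$-group with $Z(K)\cong C_2\times C_2$; its three subgroups $Z_1,Z_2,Z_3$ of order $2$ are exactly the minimal normal subgroups of $K$, so every non-trivial normal subgroup contains one of them, and hence $\bigcap_i\operatorname{core}(H_i)=1$ precisely when, for each $j$, some $\operatorname{core}(H_i)$ fails to contain $Z_j$. If moreover every involution of $K$ is central --- which one checks directly for both $G$ and $H$ --- then every non-trivial subgroup has non-trivial core, and a short combinatorial argument shows that a minimal faithful family must contain two subgroups whose cores contain $Z_j$ and $Z_k$ for some $j\neq k$; hence $\mu(K)\ge\min_{j\neq k}(c_j+c_k)$, where $c_j:=\min\{[K:H]:Z_j\subseteq\operatorname{core}(H),\ Z_i\not\subseteq\operatorname{core}(H)\ (i\neq j)\}$, and a matching family attains this bound. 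Each $c_j$ is read off from the quotient $K/Z_j$: a subgroup attaining it corresponds there to one whose core misses the common image of the remaining two $Z_i$. For $G$ one computes $G/\langle x^4\rangle\cong C_4\rtimes C_4$, $G/\langle y^2\rangle\cong D_{16}$, and $G/\langle x^4y^2\rangle\cong Q_{16}$, which give $(c_1,c_2,c_3)=(4,8,16)$; hence $\mu(G)=4+8=12$, attained by the family $\{\langle x\rangle,\langle x^2y\rangle\}$. Since $G/\langle x^4y^2\rangle\cong Q_{16}$ is generalised quaternion, its unique involution lies in every non-trivial subgroup, so every non-trivial subgroup has non-trivial core and $\mu(Q_{16})=16>12=\mu(G)$; thus $G$ is exceptional with distinguished subgroup $\langle x^4y^2\rangle$. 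The group $H$ is handled identically: $H/\langle n\rangle\cong Q_{16}$, the two remaining central quotients again have $\mu$-costs $4$ and $8$, and $\mu(H)=12$.

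For orders below $32$ I would work through the list: the abelian groups are settled above, the direct products by Wright's theorem, and the remaining non-abelian groups of orders $8,16,24$ one at a time by computing $\mu(K)$ together with $\mu(K/N)$ for every non-trivial $N\lhd K$ (the only near miss is $\mathrm{SL}_2(3)$, with $\mu=8$ while $\mu(\mathrm{SL}_2(3)/Z)=\mu(A_4)=4$). For order $32$ the key observation is that $\mu(K)\ge 8$ for every such $K$, because the Sylow $2$-subgroup of $S_7$ has order $2^4<2^5$; hence an exceptional $K$ of order $32$ must have a quotient $K/N$, necessarily of order $16$, with $\mu(K/N)\ge 9$, and the only groups of order $16$ with minimal degree exceeding $8$ are $C_{16}$, $Q_{16}$, $C_8\times C_2$ and $Q_8\times C_2$. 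The case $K/N\cong C_{16}$ forces $K$ abelian, hence non-exceptional; for $K/N\cong C_8\times C_2$ or $Q_8\times C_2$ one checks, from the short list of groups of order $32$ admitting these as quotients, that necessarily $\mu(K)\ge 10$; and for $K/N\cong Q_{16}$ the group $K$ is one of the three central extensions of $Q_{16}$ by $C_2$, of which $Q_{16}\times C_2$ has $\mu=18$ while the two non-split ones are exactly $G$ and $H$, with $\mu=12$.

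The main obstacle is this last enumeration. Conceptually it is routine --- every group involved has order at most $32$ --- but carrying it out honestly requires the minimal degree of every group of order $16$, careful bookkeeping of which of those arise as central quotients of which groups of order $32$, and, for the lower bounds on $\mu$, control of the full normal-subgroup lattice of each group in question. The lemma above reduces the case $Z(K)\cong C_2\times C_2$ to reading off three numbers from three quotients; it is the groups with larger or cyclic centre, together with the verification that $C_8\times C_2$ and $Q_8\times C_2$ never occur as distinguished quotients of a group of order $32$, where the remaining effort lies.
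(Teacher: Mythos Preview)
The paper does not actually prove this proposition: its entire proof is the single line ``See~\cite{Easdown & Praeger}.'' There is no argument in the paper to compare against beyond that bare citation, so your sketch is not reproducing the paper's route---it is supplying one where the paper supplies none.

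Judged on its own terms, your strategy is sound and, with the promised bookkeeping, would recover the Easdown--Praeger result. The central--quotient lemma you isolate---that for a $2$-group $K$ with $Z(K)\cong C_2\times C_2$ and every involution central one has $\mu(K)=\min_{j\neq k}(c_j+c_k)$, each $c_j$ being read off from $K/Z_j$---is correct and is exactly the right device for the two displayed groups; your identifications of the three central quotients of $G$ check out, and for $H$ the non-$Q_{16}$ quotients turn out to be $SD_{16}$ and a group whose Frattini subgroup already contains the relevant central involution, giving $c$-values $8$ and $4$ as you claim. Two places deserve tightening. First, ``necessarily of order $16$'' silently uses the reduction that a distinguished subgroup may be taken of prime order, which itself relies on having already excluded exceptional groups of order $\le 16$; this is standard but should be said. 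Second, Wright's additivity for direct products of $p$-groups does \emph{not} by itself yield non-exceptionality when the factors share a prime: a diagonal normal subgroup can produce a quotient of strictly larger minimal degree---this is precisely the $Q_8^{\,n}$ phenomenon the paper exhibits---so such products still need the explicit check, and your list ``orders $8,16,24$'' should really run over all non-abelian orders below $32$ (the groups of orders $6,10,12,14,18,20,21,22,27,28,30$ are easy, but they are not automatically covered by the two reductions you name).
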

\begin{proof}
See ~\cite{Easdown & Praeger}. 
\end{proof}
This example motivated the study of how small exceptional groups could actually be for a general prime $p$. This was what Lemiuex later worked on. 

\section{Theory of Johnson, Wright and Lemieux}
For the remainder of this paper, $p$ will denote an odd prime. The minimal degree problem for groups was first investigated by Johnson in ~\cite{Johnson}. He set the foundations for the this area of study and established some major theorems. A few years later, Wright, in ~\cite{Wright}, found a neat formula for the minimal degrees of direct products $p$-groups. In ~\cite{Lemiuex's thesis} Lemieux showed that there are no exceptional $p$-groups of order strictly less than $p^5$. Then in ~\cite{Lemiuex}, he demonstrated the existence of an exceptional group of order $p^5$. This section addresses without proof, (references to the relevant sourse are given) the contributions of the three authors. The following theorems are due to Johnson.  
\begin{theorem}
Let $G$ be a non-trivial $p$-group whose centre $Z$ is minimally generated by $d$ elements, and let $\mathscr{R}=\{G_1,\ldots,G_n\}$ be a minimal (faithful) repesentation of $G$. Then for $p$ an odd prime, we have $n=d$.
\end{theorem}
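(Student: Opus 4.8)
The plan is to show both inequalities $n \le d$ and $n \ge d$ separately, exploiting the correspondence between transitive constituents and subgroups via cores established in Section 2.

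First I would establish $n \le d$. The idea is to produce a faithful representation with exactly $d$ transitive constituents. Since $Z$ is minimally generated by $d$ elements, $Z$ is (as an abelian $p$-group) a direct product $Z = Z_1 \times \cdots \times Z_d$ of $d$ nontrivial cyclic groups; let $M_i = \prod_{j \ne i} Z_j$, so that $\bigcap_{i=1}^d M_i = \{1\}$ and each $Z/M_i$ is cyclic. For each $i$ I would choose a subgroup $H_i \le G$ that is maximal with respect to the property $H_i \cap Z = M_i$ (such an $H_i$ exists by Zorn/finiteness, and contains $M_i$). The key claim is that $\mathrm{core}(H_i) = M_i$: indeed $M_i \lhd G$ since $Z$ is central, so $M_i \le \mathrm{core}(H_i)$; conversely $\mathrm{core}(H_i)$ is a normal subgroup of the $p$-group $G$, so if it properly contained $M_i$ it would meet $Z$ in something strictly larger than $M_i$ (a normal subgroup of a $p$-group meets the centre nontrivially, applied inside $\mathrm{core}(H_i)/M_i$ acting on $Z/M_i$), contradicting maximality of $H_i$. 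Then $\bigcap_i \mathrm{core}(H_i) = \bigcap_i M_i = \{1\}$, so $\{G/H_1,\ldots,G/H_d\}$ is a faithful representation with $d$ constituents, giving $n \le d$ for a \emph{minimal} representation.

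Next I would establish $n \ge d$. Here I use that $\mathscr{R} = \{G_1,\ldots,G_n\}$ is faithful, i.e. writing $G_i$ for the point stabiliser subgroups, $\bigcap_{i=1}^n \mathrm{core}(G_i) = \{1\}$. Intersecting with the centre, $\bigcap_{i=1}^n (\mathrm{core}(G_i) \cap Z) = \{1\}$. Now each $\mathrm{core}(G_i) \cap Z$ is a subgroup of the abelian group $Z$; for $n$ subgroups of $Z$ to intersect trivially, one needs $Z$ to embed into the direct product of the $n$ quotients $Z/(\mathrm{core}(G_i)\cap Z)$. The crucial point — and this is where oddness of $p$ enters — is that in a minimal representation each $\mathrm{core}(G_i) \cap Z$ must have cyclic quotient $Z/(\mathrm{core}(G_i)\cap Z)$: if some quotient required two generators, one could split that constituent into two cheaper ones (a transitive $p$-group representation whose core meets $Z$ in $K$ can be replaced, when $Z/K$ is noncyclic, by two representations whose cores meet $Z$ in strictly larger subgroups $K_1, K_2$ with $K_1 \cap K_2 = K$, at no greater total cost — this is exactly the kind of surgery Johnson uses, and it fails for $p=2$ because of the quaternion phenomenon illustrated in Section 2). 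Granting that, $Z$ embeds in a product of $n$ cyclic $p$-groups, forcing $d = d(Z) \le n$.

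The main obstacle is the second inequality, specifically justifying the "splitting" step: that in a minimal faithful representation no transitive constituent can have a core meeting the centre in a subgroup of noncyclic quotient, and quantifying the cost of the replacement so that minimality is genuinely contradicted. This requires a careful analysis of transitive $p$-group representations $G/H$ in terms of the chief series through $Z$, showing $|G:H| \ge |Z : (H\cap Z)|$-type bounds and that when $Z/(H\cap Z)$ is noncyclic the index is large enough to afford two replacement constituents. The first inequality is comparatively routine once the core computation is in hand; the exponential-growth example with $Q_8$ in Section 2 is the cautionary tale showing why the hypothesis $p$ odd cannot be dropped in the splitting argument.
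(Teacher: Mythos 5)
The paper itself does not prove this statement (it is Johnson's theorem, cited to \cite{Johnson}), so your proposal has to be judged against the standard argument; measured that way, both halves of your plan have genuine holes. For $n\le d$ your inference is a non sequitur: exhibiting \emph{one} faithful representation with $d$ transitive constituents says nothing about how many constituents a \emph{minimal-degree} representation has, since minimality is minimality of the sum of indices, not of the number of orbits (for $p=2$, $\mathbb{Z}_2\times\mathbb{Z}_2$ has minimal representations with one orbit and with two orbits, which shows orbit counts are not controlled this way). What is needed is a redundancy argument applied to the minimal representation itself: the subgroups $V_i=\mathrm{core}(G_i)\cap\Omega_1(Z)$ satisfy $\bigcap_i V_i=\{1\}$ inside the $d$-dimensional $\mathbb{F}_p$-space $\Omega_1(Z)$ (faithfulness passes to the socle because every nontrivial normal subgroup of a $p$-group meets $\Omega_1(Z)$ nontrivially), hence some subfamily of at most $d$ of the $V_i$ already intersects trivially; deleting the remaining constituents preserves faithfulness and strictly lowers the degree, so minimality forces $n\le d$. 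Note also that your side claim $\mathrm{core}(H_i)=M_i$ is not justified as written: a nontrivial normal subgroup of $G/M_i$ meets $Z(G/M_i)$, which may be strictly larger than $Z/M_i$; in fact $\mathrm{core}(H_i)\cap Z=M_i$ is automatic (since $\mathrm{core}(H_i)\cap Z\le H_i\cap Z=M_i$) and is all you would need there --- but none of this repairs the logical gap in the direction.

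For $n\ge d$ your outline is the right one, but the entire weight rests on the splitting lemma, which you state and defer, and as phrased (``at no greater total cost'') it cannot contradict minimality: you need strictly smaller cost, and that is exactly where $p$ odd enters, since the two replacement constituents have index at most $|G:G_i|/p$ each, giving total at most $(2/p)|G:G_i|<|G:G_i|$ precisely when $p\ge 3$ (for $p=2$ one only gets equality --- the relevant failure is $2+2=4$ as in the regular representation of $\mathbb{Z}_2\times\mathbb{Z}_2$, not really the quaternion phenomenon). The lemma itself is easier than you fear and needs no chief-series analysis: put $H=G_i$ and $K=H\cap Z=\mathrm{core}(H)\cap Z$; if $Z/K$ is noncyclic, choose $v_1,v_2\in Z$ whose images generate two order-$p$ subgroups of $Z/K$ meeting trivially, and set $H_j=H\langle v_j\rangle$. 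Then $v_j\notin H$, so $|G:H_j|\le |G:H|/p$, and Dedekind's modular law gives $H_j\cap Z=K\langle v_j\rangle$, whence $\mathrm{core}(H_1)\cap\mathrm{core}(H_2)\cap Z=K\langle v_1\rangle\cap K\langle v_2\rangle=K$, so replacing $G/H$ by $G/H_1\;\cup\;G/H_2$ keeps the family faithful (intersect with $Z$ as before) at strictly smaller degree. With that lemma in hand, your concluding embedding $Z\hookrightarrow\prod_{i=1}^n Z/K_i$ into $n$ cyclic groups does give $d\le n$. So the plan is essentially the correct (Johnson-style) one, but as submitted the first half is invalid as argued and the second half omits the decisive lemma.
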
 
\begin{proof}
See ~\cite{Johnson}
\end{proof}
\begin{theorem}\label{Johnson's bound}
Let $G$ be a non-abelian $p$-group which does not decompose as a non-trivial direct product and whose centre is either cyclic or elementary abelian. Then we have the following bound:
$$p\mu(Z)\le\mu(G)\le\frac{1}{p}|G:Z|\mu(Z).$$ 
\end{theorem}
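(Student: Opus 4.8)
The plan is to prove the two inequalities by unrelated methods: the upper bound by exhibiting an explicit faithful representation of $G$ assembled from a minimal one of $Z$, and the lower bound by examining how $Z$ sits inside a minimal faithful representation of $G$ and using directly indecomposability to squeeze out an extra factor of $p$.

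For the upper bound, I would begin with a minimal faithful representation $\{Z_1,\dots,Z_d\}$ of the abelian group $Z$, so that $\bigcap_{j}Z_j=\{1\}$ and $\sum_j|Z:Z_j|=\mu(Z)$; here $d$ is the minimal number of generators of $Z$, and by Wright's formula $\mu(Z)=|Z|$ when $Z$ is cyclic (with $Z_1=\{1\}$) and $\mu(Z)=dp$ when $Z$ is elementary abelian of rank $d$. Each $Z_j$ is central, hence normal in $G$. The idea is to lift $Z_j$ to a subgroup $H_j$ with $Z_j\subsetneq H_j$ and $H_j\cap Z=Z_j$; then $|H_j:Z_j|\ge p$, so $|G:H_j|\le\frac1p|G:Z|\,|Z:Z_j|$, and since $H_j\cap Z$ is central it coincides with $\operatorname{core}(H_j)\cap Z$, whence $\bigcap_j\operatorname{core}(H_j)$ is a normal subgroup of the $p$-group $G$ meeting $Z$ trivially and so is trivial --- making $\{H_j\}$ faithful of degree $\le\frac1p|G:Z|\mu(Z)$. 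To find $H_j$, note $Z/Z_j$ is cyclic, so if every element of order $p$ of $G/Z_j$ lay in $Z/Z_j$ then $\Omega_1(G/Z_j)$ would be cyclic, forcing $G/Z_j$ cyclic and hence $[G,G]\le Z_j$; choosing the $Z_j$ so that none of them contains $[G,G]$ (automatic in the cyclic case where $Z_1=\{1\}$, and possible in the elementary abelian case since a nonzero subspace of $Z$ is missed by plenty of hyperplanes in general position) rules this out, so $G/Z_j$ has an element of order $p$ outside $Z/Z_j$, and pulling it back gives $H_j$.

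For the lower bound, let $\mathscr{R}=\{G_1,\dots,G_n\}$ be a minimal faithful representation of $G$; by the first theorem of Johnson quoted above, $n=d$. I would first record two routine facts: $\operatorname{core}(G_i)\cap Z=G_i\cap Z$ for each $i$ (because $G_i\cap Z\le Z$ is normal in $G$), so that $\bigcap_i(G_i\cap Z)=\{1\}$; and $Z$ acts on $G/G_i$ with all point stabilisers equal to $Z\cap G_i$, so every $Z$-orbit has size $|Z:Z\cap G_i|$ and $|G:G_i|=|G:G_iZ|\cdot|Z:Z\cap G_i|$. Everything then reduces to showing $|G:G_i|\ge p^2$ for every $i$, since in the elementary abelian case this yields $\mu(G)=\sum_{i=1}^{d}|G:G_i|\ge dp^2=p\mu(Z)$; the cyclic case ($d=1$) I would dispose of separately, using that there $\operatorname{core}(G_1)=\{1\}$ forces $Z\cap G_1=\{1\}$, so $Z$ acts freely on $G/G_1$, $|Z|$ divides $|G:G_1|$, and $|G:G_1|=|Z|$ would exhibit $Z$ as a direct factor of $G$ --- impossible --- so $|G:G_1|\ge p|Z|=p\mu(Z)$.

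To get $|G:G_i|\ge p^2$ (assuming now $Z$ elementary abelian), I would argue that $Z\not\le G_i$ --- otherwise $Z\le\operatorname{core}(G_i)$ already forces $\bigcap_{j\ne i}(Z\cap G_j)=\{1\}$, hence $\bigcap_{j\ne i}\operatorname{core}(G_j)=\{1\}$, so $\{G_j:j\ne i\}$ is a strictly smaller faithful representation, contradicting minimality. Thus $Z\cap G_i$ is a proper subspace of $Z$; if it has codimension $\ge2$, or if $G_iZ\ne G$, the orbit identity gives $|G:G_i|\ge p^2$ immediately. The one remaining configuration --- $Z\cap G_i$ a hyperplane and $G_iZ=G$ --- forces $|G:G_i|=p$, so $G_i$ is maximal and normal, and then for any $z\in Z\smallsetminus G_i$ one has $G=G_i\langle z\rangle$ with $G_i\cap\langle z\rangle=\{1\}$ and $\langle z\rangle$ central of order $p$, i.e.\ $G=G_i\times\langle z\rangle$ --- a nontrivial direct decomposition, contradicting the hypothesis. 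I expect this last step to be the main obstacle: it is the only place where all of the hypotheses (non-abelian, directly indecomposable, centre cyclic or elementary abelian) are used at once, and the argument must dovetail Johnson's equality $n=d$, the minimality of $\mathscr{R}$, and the orbit count $|G:G_i|=|G:G_iZ|\cdot|Z:Z\cap G_i|$, so keeping track of exactly which reductions are legitimate --- and handling the borderline case where a constituent is normal of index $p$ --- is the delicate part.
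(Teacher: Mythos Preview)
The paper does not actually prove this theorem: its entire proof is the single line ``See \cite{Johnson}.'' There is therefore no argument in the paper to compare your proposal against; the result is simply quoted from Johnson's original article.

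On its own merits your outline is sound and would yield a self-contained proof. For the upper bound, the lifting $Z_j\mapsto H_j=\langle Z_j,h\rangle$ with $hZ_j$ of order $p$ outside $Z/Z_j$ works exactly as you say: centrality of $Z_j$ gives $|H_j:Z_j|=p$ and $H_j\cap Z=Z_j$, and the unique-subgroup-of-order-$p$ characterisation of cyclic $p$-groups (valid since $p$ is odd throughout the paper) makes the existence of $h$ equivalent to $[G,G]\not\le Z_j$; your remark that one can choose the hyperplanes $Z_j$ in general position avoiding a fixed nonzero vector of $Z$ is correct and disposes of the only case requiring care. For the lower bound, your case split is clean: the cyclic case uses indecomposability to rule out $|G:G_1|=|Z|$, and in the elementary abelian case the only delicate configuration $|G:G_i|=p$ forces $G_i\lhd G$ and $G=G_i\times\langle z\rangle$ for any $z\in Z\setminus G_i$, again contradicting indecomposability. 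The minimality argument you use to get $Z\not\le G_i$ tacitly assumes $d\ge 2$, but since $d=1$ is already absorbed into your separate cyclic case this is harmless.
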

\begin{proof}
See ~\cite{Johnson}
\end{proof}
The next theorem is Wright's main result.
\begin{theorem}
Let $G$ and $H$ be $p$-groups. Then $\mu(G\times H)=\mu(G)+\mu(H).$
\end{theorem}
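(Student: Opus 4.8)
The plan is to dispose of the easy inequality directly and to prove the reverse inequality using Theorem~1 together with the structure of the centre. The bound $\mu(G\times H)\le\mu(G)+\mu(H)$ holds for arbitrary finite groups: if $G$ acts faithfully on $X$ with $|X|=\mu(G)$ and $H$ on $Y$ with $|Y|=\mu(H)$, let $G\times H$ act on $X\sqcup Y$ through the two projections; the kernel is $(1\times H)\cap(G\times 1)=1$. So all the content lies in $\mu(G\times H)\ge\mu(G)+\mu(H)$.

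For that, I would fix a minimal faithful representation $\{K_1,\dots,K_n\}$ of $G\times H$ with cores $N_i=\operatorname{core}(K_i)$, so that $\mu(G\times H)=\sum_i|G\times H:K_i|$ and $\bigcap_iN_i=1$. Write $S=\Omega_1(Z(G\times H))=\Omega_1(Z(G))\oplus\Omega_1(Z(H))=:S_G\oplus S_H$, an $\mathbb{F}_p$-space of dimension $d_G+d_H$, where $d_G,d_H$ minimally generate the two centres; Theorem~1 gives $n=d_G+d_H$. Since a nontrivial normal subgroup of a $p$-group meets $\Omega_1$ of its centre nontrivially, $\bigcap_iN_i=1$ is equivalent to $\bigcap_iC_i=1$ for the \emph{central traces} $C_i:=N_i\cap S$, and minimality forbids $C_i\supseteq\bigcap_{j\ne i}C_j$ for any $i$ (otherwise $\{K_j:j\ne i\}$ would already be faithful, of smaller degree). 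Running these two facts down the chain $S\supseteq C_1\supseteq C_1\cap C_2\supseteq\cdots\supseteq\bigcap_iC_i=1$ forces every inclusion to be strict, and hence --- the ambient space having dimension $n$ --- every step to drop the dimension by exactly one. So each $C_i$ is a hyperplane and the $C_i$ are in general position; equivalently $C_i=\ker\phi_i$ for a basis $\phi_1,\dots,\phi_n$ of $S^{\ast}$.

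The next step is a monotonicity property. For a $p$-group $P$ and a subspace $D\le\Omega_1(Z(P))$, put $\nu(P,D)=\min\{\,|P:K| : \operatorname{core}_P(K)\cap\Omega_1(Z(P))\le D\,\}$. If $\widetilde K\le G\times H$ realizes $\nu(G\times H,C)$, then $K:=\widetilde K\cap(G\times 1)$ satisfies $|G:K|\le|G\times H:\widetilde K|$ and $\operatorname{core}_G(K)=\operatorname{core}_{G\times H}(\widetilde K)\cap(G\times 1)$, which yields $\nu(G\times H,C)\ge\nu(G,C\cap S_G)$ and, symmetrically, $\nu(G\times H,C)\ge\nu(H,C\cap S_H)$. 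Since $|G\times H:K_i|\ge\nu(G\times H,C_i)$, the theorem reduces to splitting $\{1,\dots,n\}$ into $\mathcal G\sqcup\mathcal H$ with $\bigcap_{i\in\mathcal G}(C_i\cap S_G)=1$ in $S_G$ and $\bigcap_{i\in\mathcal H}(C_i\cap S_H)=1$ in $S_H$, because then
\[
\mu(G\times H)\;\ge\;\sum_{i\in\mathcal G}\nu(G,C_i\cap S_G)\;+\;\sum_{i\in\mathcal H}\nu(H,C_i\cap S_H)\;\ge\;\mu(G)+\mu(H),
\]
the last inequality because a family of subgroups of $G$ whose central traces intersect trivially is itself faithful (the same triviality criterion), and likewise for $H$.

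Producing the split is the step I expect to be the main obstacle. The indices $i$ with $C_i\supseteq S_H$ are forced into $\mathcal G$ and those with $C_i\supseteq S_G$ into $\mathcal H$ (mutually exclusive, since each $C_i$ is a proper subspace while $S_G+S_H=S$); the remaining \emph{diagonal} indices $I_0$, which genuinely occur already for $C_p\times C_p$, must be allotted so that \emph{both} intersections stay trivial. Dualizing through the $\phi_i$, this becomes the task of partitioning $I_0$ into a set $A$ whose images in $S_G^{\ast}/\langle\phi_j|_{S_G}:C_j\supseteq S_H\rangle$ form a basis and a set $B$ whose images in $S_H^{\ast}/\langle\phi_j|_{S_H}:C_j\supseteq S_G\rangle$ form a basis --- two quotient spaces whose dimensions add up to $|I_0|$. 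This is exactly the hypothesis of the matroid union theorem applied to the two linear matroids on $I_0$ coming from the images of $(\phi_i|_{S_G})_{i\in I_0}$ and $(\phi_i|_{S_H})_{i\in I_0}$ in those quotients; its rank condition --- every $J\subseteq I_0$ has size at most the sum of its ranks in the two matroids --- holds because a violation would unwind, using $\phi_j|_{S_H}=0$ whenever $C_j\supseteq S_H$ and $\phi_j|_{S_G}=0$ whenever $C_j\supseteq S_G$, into a nontrivial linear dependence among the $\phi_i$, impossible since they form a basis. With the split in hand the displayed inequality gives $\mu(G\times H)\ge\mu(G)+\mu(H)$, which together with the easy bound establishes the theorem.
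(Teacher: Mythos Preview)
The paper does not actually prove this statement: its ``proof'' is a one--line reference to Wright~\cite{Wright}, noting that the result in fact holds for all nilpotent groups. So there is no in--paper argument to compare against; what you have supplied is a genuine self--contained proof for the $p$--group case, and it is correct.

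A few remarks on your argument. The chain step is fine once one observes that strictness at stage~$k$ follows from $\bigcap_{j\neq k}C_j\subseteq C_1\cap\cdots\cap C_{k-1}$ together with $\bigcap_{j\neq k}C_j\not\subseteq C_k$, and that reordering gives $\dim C_i=n-1$ for every~$i$; you might make this explicit. The monotonicity of your auxiliary function $\nu$ and the reduction to a ``split'' of the index set are clean and correct. The one place where you work harder than necessary is the invocation of matroid union: writing the $\phi_i$ as rows of an invertible $n\times n$ matrix $M=[A\,|\,B]$ blocked according to $S_G^\ast\oplus S_H^\ast$, the generalized Laplace expansion
\[
\det M=\sum_{|\mathcal G|=d_G}\pm\det A_{\mathcal G}\,\det B_{\{1,\dots,n\}\setminus\mathcal G}
\]
has a nonzero left side, so some summand is nonzero, which immediately produces the desired partition $\mathcal G\sqcup\mathcal H$. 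This replaces the matroid machinery with elementary linear algebra and also makes transparent why the ``rank condition'' you check is exactly the non--degeneracy of~$M$.

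What your route buys, relative to simply citing Wright, is a proof that stays entirely inside the framework already set up in the paper (Theorem~1 on the number of orbits, and the socle criterion for faithfulness), at the cost of not recovering Wright's more general nilpotent statement.
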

\begin{proof}
Wright actually proved this for all nilpotent groups and for all primes $p,$ see ~\cite{Wright}.
\end{proof}
Lemieux in ~\cite{Lemiuex's thesis} studied small exceptional $p$-groups. Although he could not find the smallest exceptional $p$-groups, he severely limited the existence of such groups in the following theorem:
\begin{theorem}
There are no exceptional $p$-groups of order strictly less than $p^5$. If there are any exceptional $p$-groups of order $p^5$, then they must be a central extension of these 4 isomorphism classes of distinguished quotients which are $p$-groups of order $p^4:$ 
\begin{align*}
&G_1=\langle x,y,z\; |\; x^{p^2}=y^p=z^p=1,\; [x,y]=1,\; [x,z]=1, \; [y,z]=x^p\rangle \\
&G_2=\langle x,y,z\; |\; x^{p^2}=y^p=z^{p^2}=1,\; z^p=x^p,\; [x,y]=x^p,\; [x,z]=y, \; [y,z]=1\rangle \\
&G_3=\langle x,y,z\; |\; x^{p^2}=y^p=z^{p^2}=1,\; z^p=x^{\alpha p},\; [x,y]=x^p,\; [x,z]=y,\; [y,z]=1\rangle \\
&G_4=\langle x,y,z,w\; |\; x^p=y^p=z^p=w^p=1,\; [x,y]=z,\; [x,z]=[x,w]=[y,z]= \\
&\qquad\ \; [y,w]=[z,w]=1\rangle \\
&\text{where $\alpha$ is a quadratic non-residue mod $p$.}
\end{align*}
\end{theorem}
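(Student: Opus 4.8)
The plan is to reduce both assertions to a finite computation over the groups of order $p^{3}$, $p^{4}$ and $p^{5}$: Wright's theorem lets us discard direct factors, Johnson's first theorem controls the \emph{number} of transitive components of a minimal representation, and Theorem~\ref{Johnson's bound} sandwiches its \emph{size}. For the second assertion the first assertion is itself used as an inductive device --- ``there is no smaller exceptional group''.

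\emph{Orders below $p^{5}$.} Abelian $p$-groups are dealt with once: $\mu$ of an abelian $p$-group is the sum of the orders of its invariant factors, and a quotient has invariant factors dominated (after sorting) by those of the group, so $\mu$ does not increase along any quotient; hence no abelian group is exceptional. Among the non-abelian groups of order at most $p^{4}$, the only one with a non-trivial direct decomposition is $C_{p}\times M$ with $M$ extraspecial of order $p^{3}$, and $\mu(C_{p}\times M)=p+p^{2}$ dominates $\mu$ of each of its quotients by a direct check. This leaves the directly indecomposable non-abelian groups. For order $p^{3}$ these are the two extraspecial groups, each with centre $C_{p}$, so by Johnson's first theorem a minimal representation has one component and $\mu=p^{2}$, while every proper quotient is an abelian quotient of $C_{p}\times C_{p}$, of minimal degree at most $2p<p^{2}$. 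For order $p^{4}$ one first observes that a non-abelian group has centre $C_{p}$, $C_{p^{2}}$ or $C_{p}\times C_{p}$, because a centre of order $p^{3}$ makes $G/Z$ cyclic and $G$ abelian. When $Z$ is $C_{p^{2}}$ (which already forces $G$ indecomposable) the two bounds of Theorem~\ref{Johnson's bound} coincide and give $\mu(G)=p^{3}$; when $Z$ is $C_{p}\times C_{p}$ they give $\mu(G)=2p^{2}$ for the indecomposable $G$; and in both cases every proper quotient is a quotient of $G/M$ for a minimal central subgroup $M$, where $G/M$ has order $p^{3}$ and is non-cyclic (else $G$ is abelian), hence has minimal degree at most $p^{2}+p<\mu(G)$, and the same bound survives one further quotient. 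The delicate case is $Z(G)=C_{p}$: then $Z$ is the unique minimal normal subgroup, so every proper quotient is a quotient of the order-$p^{3}$ group $G/Z$; if $G/Z$ is non-abelian all such quotients have minimal degree at most $p^{2}\le\mu(G)$, while if $G/Z$ is abelian it must be $C_{p^{2}}\times C_{p}$ --- it is not cyclic, and an alternating-form parity argument on the commutator pairing $G/Z\times G/Z\to G'$ excludes $C_{p}^{3}$ --- and then one shows that $G$ has no subgroup of order $p^{2}$ meeting $Z$ trivially: such a subgroup would give an abelian normal subgroup $A$ of index $p$, and for $g\in G\setminus A$ the homomorphism $A\to G'=Z$, $a\mapsto[a,g]$, would have kernel $A\cap Z(G)=Z$ of index $p^{2}$ in $A$, impossible since $Z\cong C_{p}$. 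Hence $\mu(G)=p^{3}>p^{2}+p$, and nothing of order $<p^{5}$ is exceptional.

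\emph{Order $p^{5}$.} Let $E$ be exceptional with distinguished subgroup $N$. First reduce $N$ to a minimal normal subgroup: if $M\subsetneq N$ is minimal normal in $E$ (hence central of order $p$) and $\mu(E/M)\le\mu(E)$, then $E/M$ is an exceptional group of order $p^{4}$ --- witnessed by $N/M$, since $\mu((E/M)/(N/M))=\mu(E/N)>\mu(E)\ge\mu(E/M)$ --- contradicting the first part; so $\mu(E/M)>\mu(E)$ and we may replace $N$ by $M$. Thus $Q:=E/N$ has order $p^{4}$. Next one rules out $Q$ abelian: then $E'\le N$ has order at most $p$, and these ``almost extraspecial'' groups of order $p^{5}$ are disposed of directly; so $Q$ is non-abelian, and by Theorem~\ref{Johnson's bound} $\mu(Q)\le p^{3}$. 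It remains to identify $Q$, and the key device is this: take a minimal faithful representation of $E$ with components corresponding to $H_{1},\dots,H_{n}$, so $\bigcap_{i}\operatorname{core}(H_{i})=1$; the subgroups $H_{i}N$ then define a representation of $E/N$ of degree $\sum_{i}|E:H_{i}N|\le\mu(E)$, which, were it faithful, would give $\mu(E/N)\le\mu(E)$. Hence $\bigcap_{i}\operatorname{core}_{E}(H_{i}N)$ is strictly larger than $N$, for \emph{every} minimal representation of $E$ --- a strong constraint linking $N$, $Z(E)$ and the direct decomposition of $E$. Feeding in Johnson's lower bound $\mu(E)\ge p\,\mu(Z(E))$ (valid when $E$ is indecomposable with cyclic or elementary abelian centre; the decomposable $E$, and those with $Z(E)$ of mixed type, reduce to the order-$p^{4}$ analysis as above) shows $\mu(E)$ is already too large unless $Z(E)$ is very small, which together with $\mu(Q)\le p^{3}$ confines $Q$ to a short list of groups of large minimal degree. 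One then goes through the non-abelian groups of order $p^{4}$ one at a time, and for every $Q$ other than $G_{1},G_{2},G_{3},G_{4}$ shows directly that every central $C_{p}$-extension $E$ satisfies $\mu(E)\ge\mu(Q)$ --- typically by exhibiting a faithful representation of $E$ of degree $\mu(Q)$ adapted to $N$, or by bounding $\mu(E)$ from below once $Z(E)$ is pinned down --- so that $E$ cannot be exceptional with distinguished quotient $Q$. The four groups $G_{1},\dots,G_{4}$ survive, which is consistent with the constructions of the next sections.

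\emph{The main obstacle} is the case-by-case analysis just described: one must run through the classification of the groups of order $p^{4}$ and, for each one outside $\{G_{1},\dots,G_{4}\}$, rule out \emph{all} of its central $C_{p}$-extensions simultaneously, which amounts to understanding exactly when passing from a minimal representation of $E$ to the induced representation of $E/N$ is forced to enlarge the core --- the precise mechanism behind exceptionality. Everything else is routine: the Wright reduction, the abelian case, the groups of order at most $p^{3}$, and --- pleasantly --- most of the order-$p^{4}$ analysis, where the two bounds of Theorem~\ref{Johnson's bound} already agree.
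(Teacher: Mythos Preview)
The paper does not prove this theorem at all: it is stated as Lemieux's result with a bare reference to his thesis, so there is no ``paper's own proof'' to compare against. What you have written is therefore being measured only against itself.

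Your outline for orders below $p^{5}$ is essentially a complete argument and is correct, including the alternating-form parity step ruling out $G/Z\cong C_p^{3}$ and the slightly awkwardly phrased step producing the abelian normal subgroup $A=KZ$ when $Z(G)\cong C_p$ and $G/Z\cong C_{p^{2}}\times C_p$ (you should spell out that $A=KZ$ is abelian because $K$ is and $Z$ is central, and that $A$ is normal because $G/Z$ is abelian; otherwise a reader will not see where $A$ comes from). The reduction at order $p^{5}$ to a central $C_p$-extension of a non-abelian group of order $p^{4}$, and the core-enlargement observation for the induced representation on $E/N$, are also correct and are exactly the right levers.

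The genuine gap is that you stop precisely where the work begins. You say ``one then goes through the classification of the groups of order $p^{4}$ one at a time'' and later call this ``the main obstacle'', but you do not carry it out, nor do you explain concretely for even one of the excluded groups why every central $C_p$-extension satisfies $\mu(E)\ge\mu(Q)$. That case analysis \emph{is} the theorem: everything preceding it is standard reduction using Johnson and Wright. A sketch that ends with ``now do the hard part'' is a plan, not a proof. If you want this to stand on its own you must, for each non-abelian $Q$ of order $p^{4}$ with $\mu(Q)\ge p^{2}+p$ other than $G_1,\dots,G_4$, either bound $\mu(E)$ below by $\mu(Q)$ via the centre of $E$ (which you have not pinned down --- $Z(E)$ need not equal $\langle n\rangle$), or exhibit a faithful representation of $E$ of degree $\mu(Q)$; and you must explain why the four survivors genuinely cannot be eliminated by the same methods.
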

In ~\cite{Lemiuex} Lemiuex showed the existence of an exceptional group of order $p^5$ with distinguished quotient isomorphic to $G_1.$ The remaining cases will be examined in the next section. 

\section{The remaining cases} 
\begin{theorem}
Here are three exceptional $p$-groups of order $p^5$ and minimal degree $2p^2:$
\begin{align*}
&E_1=\langle x,y,z,n\ |\ x^{p^2}=y^p=z^p=n^p=1,\ [y,z]=x^pn^{-1},\ [x,z]=n,\\
&\qquad\ \; [x,y]=[x,n]=[y,n]=[z,n]=1\rangle \\
&E_2=\langle x,y,z,n\ |\ x^{p^2}=y^p=z^{p^2}=n^p=1,\ x^p=z^pn,\ [x,y]=x^p, \\ 
&\qquad\ \; [x,z]=y,\ [y,z]=[x,n]=[y,n]=[z,n]=1\rangle \\
&E_3=\langle x,y,z,n\ |\ x^{p^2}=y^p=z^{p^2}=n^p=1,\, z^p=x^{\alpha p}n,\, [x,z]=yn,\\
&\qquad\ \; [x,y]=x^p,\, [y,z]=[x,n]=[y,n]=[z,n]=1\rangle \\
\end{align*}
with distinguished quotients isomorphic to $G_1$, $G_2$ and $G_3$ respectively.  
\end{theorem}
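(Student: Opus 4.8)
The plan is to verify, for each $i\in\{1,2,3\}$, two things: first, that $E_i$ is genuinely a group of order $p^5$ with the stated presentation (i.e. the relations are consistent and do not collapse the order), and second, that $\mu(E_i)=2p^2$ while $\mu(E_i/\langle n\rangle)=\mu(G_i)>2p^2$, whence $E_i$ is exceptional with distinguished subgroup $\langle n\rangle$. The quotient statement is the easy half: since $n$ is visibly central, $\langle n\rangle\lhd E_i$, and killing $n$ turns each presentation of $E_i$ into the presentation of $G_i$ given in the previous theorem (for $E_1$, $x^pn^{-1}\mapsto x^p$ and $n\mapsto 1$; for $E_2$ and $E_3$, $z^pn\mapsto z^p$ and the commutators reduce accordingly). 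By Lemieux's theorem each $G_i$ is a distinguished quotient witnessing that a central extension \emph{could} be exceptional, and the value $\mu(G_i)$ is recorded there (it is $>2p^2$ in each of the three cases); I would cite that directly.

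The substantive work is to show $\mu(E_i)=2p^2$. For the upper bound $\mu(E_i)\le 2p^2$, the centre of $E_i$ should be shown to be $Z=\langle x^p,n\rangle\cong C_p\times C_p$, so by Johnson's first theorem a minimal faithful representation consists of exactly $d=2$ transitive constituents. I would then exhibit two subgroups $H_1,H_2$ of index $p^2$ with $\mathrm{core}(H_1)\cap\mathrm{core}(H_2)=\{1\}$: concretely, take $H_1$ to be a maximal-class-friendly abelian (or near-abelian) subgroup of order $p^3$ containing exactly one of the two central generators in its core, and $H_2$ similarly for the other generator, arranged so that $\mathrm{core}(H_1)=\langle x^p\rangle$ and $\mathrm{core}(H_2)=\langle n\rangle$ (or some pair of distinct subgroups of $Z$ of order $p$ intersecting trivially). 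This gives a faithful representation of degree $2p^2$. For the matching lower bound, I would combine Johnson's bound $p\mu(Z)\le\mu(E_i)$ — which gives $\mu(E_i)\ge p\cdot 2p=2p^2$ once one checks $E_i$ is non-abelian, directly indecomposable, and has $Z$ elementary abelian — so equality holds. (One must separately confirm directindecomposability: a nontrivial direct factor would have to meet the centre, and the commutator structure above should preclude a complement.)

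The main obstacle I expect is the explicit construction of the two index-$p^2$ subgroups and the verification that their cores are the two prescribed distinct order-$p$ subgroups of $Z$ — i.e. that each $H_j$ contains no larger normal subgroup. This is where the specific relations matter: one needs $\langle n\rangle\not\le H_1$ but $\langle x^p\rangle\le H_1$, and the defining commutators (e.g. $[x,z]=n$ in $E_1$, $[x,z]=yn$ in $E_3$) must be used to show that any normal subgroup of $E_i$ properly containing $\langle x^p\rangle$ inside $H_1$ would be forced to contain $n$, contradicting $n\notin H_1$. The computations are routine collection-in-$p$-groups arguments, but they must be done carefully and separately for each $E_i$ because the three presentations differ in exactly these commutators and in the power relation linking $z^p$ to $x^p$ and $n$; the case $E_3$, with the quadratic non-residue $\alpha$ and the twisted commutator $[x,z]=yn$, is the most delicate and is presumably the one Lemieux left open. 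I would organize the proof as three parallel subsections, isolating a single lemma ("$Z(E_i)=\langle x^p,n\rangle$ and $E_i$ is directly indecomposable of order $p^5$") that is proved uniformly, followed by the core computation tailored to each case.
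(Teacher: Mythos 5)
Your plan matches the paper's proof in all essentials: pass to $E_i/\langle n\rangle\cong G_i$, show $Z(E_i)=\langle x^p,n\rangle$ so Johnson's orbit theorem forces two transitive constituents, exhibit two index-$p^2$ subgroups whose cores intersect trivially to get $\mu(E_i)\le 2p^2$, use Johnson's bound $p\mu(Z)\le\mu(E_i)$ for the reverse inequality, and compare with $\mu(G_i)=p^3$ from Lemieux's table. The only difference is that your sketch leaves the explicit subgroups and core computations to be done (the paper takes $\langle x,y\rangle$ and $\langle y,z\rangle$, and the cores that actually arise are not both order-$p$ subgroups of the centre as you guessed, e.g.\ $\mathrm{core}\langle x,y\rangle=\langle x,y\rangle$ in $E_2$), but this is the same route, not a different one.
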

\begin{proof}
That $E_1$ is exceptional with distinguished quotient $G_1$ was shown in ~\cite{Lemiuex}. We now show that $E_2$ is exceptional with distinguished quotient isomorphic to $G_2.$ It is clear that $E_2/{\langle n\rangle}\cong G_2.$ The centre of $E_2$ is $\langle x^p,n\rangle,$ and so by Theorem 1, any minimal faithful permutation representation of $E_2$ must have two orbits. Let 
$$H_1=\langle x,y\rangle\cong\langle x,y\ |\ x^{p^2}=y^p=1,\ [x,y]=x^p\rangle\qquad\text{and}$$
$$H_2=\langle y,z\rangle\cong\mathbb{Z}_{p^2}\times\mathbb{Z}_p$$
be subgroups of $E_2$ of order $p^3.$ Observe that $H_1\lhd E_2$ and $H_2\ntriangleleft E_2,$ so that core$H_1=H_1$ and core$H_2\neq H_2.$ Therefore $|\text{core}H_2|=1,p$ or $p^2.$ If $|\text{core}H_2|=1,$ then $\text{core}H_1\cap\text{core}H_2=\{1\}$ and we are done. Now suppose $|\text{core}H_2|=p^2$ so that $\text{core}H_2\cong\mathbb{Z}_{p^2}\ \text{or}\ \mathbb{Z}_{p}\times\mathbb{Z}_{p}.$ Thus without loss of generality, $\text{core}H_2=\langle z\rangle\ \text{or}\ \langle zy\rangle\ \text{or}\ \langle z^p,y\rangle.$ However none of these are normal subgroups of $E_2$ since $[x,z]=y,$ $(z^{-1}y^{-1})^x=z^{p-1}n$ and $(y^{-1})^x=z^py^{-1}n$ respectively. Hence $|\text{core}H_2|=p,$  so that without loss of generality, $\text{core}H_2\cong\langle y\rangle\ \text{or}\ \langle z^p\rangle\ \text{or}\ \langle z^py\rangle.$ However of the three, only $\langle z^p\rangle\lhd E_2.$ Hence $\text{core}H_2=\langle z^p\rangle=\langle x^pn^{-1}\rangle.$ Whence we have 
$$\text{core}H_1\cap\text{core}H_2=H_1\cap\langle x^pn^{-1}\rangle=\{1\}.$$ Now we get $$2p^2=p\mu(Z(E_2))\le\mu(E_2)\le|E_2:H_1|+|E_2:H_2|=p^2+p^2=2p^2<p^3=\mu(G_2)$$ 
and so $\mu(E_2)=2p^2.$ The first inequality is by Theorem 2 and the last equality is in the content of table $1$ in ~\cite{Lemiuex}.\\

Now we proceed to show that $E_3$ is exceptional with distinguished quotient $G_3$. Firstly, it is obvious that $E_3/{\langle n\rangle}\cong G_3.$ Observe that the centre of $E_3$ is $\langle x^p,n\rangle,$ and so by Theorem 1, any minimal faithful permutation representation of $E_3$ must have two orbits. Let 
$$S_1=\langle x,y\rangle\cong\langle x,y\ |\ x^{p^2}=y^p=1,\ [x,y]=x^p\rangle\qquad\text{and}$$
$$S_2=\langle y,z\rangle\cong\mathbb{Z}_{p^2}\times\mathbb{Z}_p$$ 
be the two subgroups of $E_3$ corresponding to the two orbits. We need to show that $\text{core}S_1\cap\text{core}S_2=\{1\}.$ Note that $S_1\ntriangleleft E_3.$ Now observe that $\langle x^p,y\rangle$ is a maximal subgroup of $S_1$, of order $p^2$ and normal in $E_3.$ Thus $\text{core}S_1=\langle x^p,y\rangle.$ It is easy to see that $S_2\ntriangleleft E_3.$ If $|\text{core}S_2|=p^2,$ then without loss of generality we may assume $\text{core}S_2\cong\langle z\rangle\ \text{or}\ \langle y,z^p\rangle.$ Now neither $\langle z\rangle$ nor $\langle y,z^p\rangle$ are normal in $E_3.$ Thus $|\text{core}S_2|=p$ and we conclude without loss of generality that $\text{core}S_2\cong\langle z^p\rangle.$ Now
$$\text{core}S_1\cap\text{core}S_2=\langle x^p,y\rangle\cap\langle z^p\rangle=\langle z^pn,y\rangle\cap\langle z^p\rangle=\{1\}.$$ So we have 
$$2p^2=p\mu(Z(E_3))\le\mu(E_3)\le|E_2:S_1|+|E_2:S_2|=p^2+p^2=2p^2<p^3=\mu(G_3),$$ and so $\mu(E_3)=2p^2$ and we are done.
\end{proof}
Note that the group $G_4$ was omitted from the considerations of the theorem. Now we will prove that $G_4$ does not centrally extend to an exceptional group of order $p^5.$ 
\begin{theorem}
Centrally extending the group $G_4$ will not result in an exceptional $p$-group $E_4$ of order $p^5,$ with distinguished quotient isomorphic to $G_4$.  
\end{theorem}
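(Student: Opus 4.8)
The plan is to assume, for contradiction, that some central extension $E_4$ of $G_4$ by $\mathbb{Z}_p$ of order $p^5$ is exceptional with distinguished quotient $G_4$. The first step is to pin down the centre of $G_4$ and of any such $E_4$. Since $G_4 = \langle x,y,z,w \rangle$ has $[x,y]=z$ and all other commutators trivial, its centre is $\langle z,w \rangle \cong \mathbb{Z}_p \times \mathbb{Z}_p$, minimally generated by $2$ elements, so by Theorem 1 a minimal faithful representation of $G_4$ has exactly $2$ orbits; one then computes (or cites, as in the previous theorem, the relevant table in \cite{Lemiuex}) that $\mu(G_4) = 2p^2$. For $E_4$ to be exceptional we would need $\mu(E_4) < \mu(G_4) = 2p^2$. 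The centre of $E_4$ contains the distinguished subgroup $N \cong \mathbb{Z}_p$, and $Z(E_4)/N \le Z(G_4)$, so $Z(E_4)$ has order $p^2$ or $p^3$; in either case, since $G_4$ (hence $E_4$) has exponent $p$ forcing $Z(E_4)$ to be elementary abelian, $Z(E_4)$ is minimally generated by $2$ or $3$ elements. By Theorem 1 any minimal faithful representation of $E_4$ then has $2$ or $3$ orbits.

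The second step is the arithmetic squeeze. If $Z(E_4)$ needs $3$ generators, a minimal representation has $3$ orbits $E_4/H_1, E_4/H_2, E_4/H_3$ with trivial core intersection; each proper subgroup has index at least $p$, but more carefully, to kill the $3$-generated centre one needs the cores to intersect trivially, and I would argue each index must be at least $p^2$ (a subgroup of index $p$ is normal and contains a large chunk of the centre), giving $\mu(E_4) \ge 3p^2 > 2p^2$, done. So the real case is $|Z(E_4)| = p^2$, i.e. the extension is such that $Z(E_4) \not\supseteq$ the full preimage of $Z(G_4)$. Here a minimal representation has $2$ orbits $E_4/H_1, E_4/H_2$ with $\operatorname{core}H_1 \cap \operatorname{core}H_2 = \{1\}$ and $|E_4 : H_1| + |E_4 : H_2| \le 2p^2 - 1 < 2p^2$, forcing (since indices are powers of $p$ and each is $\ge p$) that $\{|E_4:H_1|, |E_4:H_2|\}$ is $\{p, p^2\}$ or $\{p,p\}$ — and in fact we may assume one of them, say $H_1$, has index $p$, hence is normal, hence $\operatorname{core}H_1 = H_1 \supseteq$ a subgroup of order $p^4$. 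Then $\operatorname{core}H_2$ must avoid $H_1$, so $\operatorname{core}H_2 \cap H_1 = \{1\}$ with $|\operatorname{core}H_2| \ge p^5/|H_2| \cdot$ (something) — I would instead directly argue that $\operatorname{core}H_2$ is a normal subgroup of order $\ge p^3$ disjoint from the maximal subgroup $H_1$, which is impossible in a $p$-group of order $p^5$ since any two normal subgroups of orders $p^4$ and $p^3$ intersect in a subgroup of order $\ge p^2$.

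The cleanest route, and the one I would actually write, is: show that in $E_4$ (which is a $p$-group of class $2$ with derived subgroup of order $p$ and $2$-generated centre of order $p^2$ or $p^3$) one cannot have two subgroups whose cores intersect trivially and whose indices sum to less than $2p^2$. The key structural fact is that $[E_4, E_4] = \langle z \rangle$ (image of the commutator $[x,y]$) has order $p$ and lies in $Z(E_4) \cap \Phi(E_4)$, and \emph{every} nontrivial normal subgroup of $E_4$ meets $Z(E_4)$ nontrivially (true in any $p$-group), in fact contains a central subgroup of order $p$; combined with the classification of which order-$p$ central subgroups are "killable" (the distinguished subgroup $N$ must be one such that $E_4/N \cong G_4$), I reduce to checking a short finite list of candidate pairs $(H_1, H_2)$ and verifying each fails. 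The main obstacle — and the part requiring genuine care rather than routine computation — is the case $|Z(E_4)| = p^2$ with an index pattern $\{p, p^2\}$: here one must rule out the existence of a non-normal subgroup $H_2$ of index $p^2$ whose core is a suitable order-$p$ subgroup disjoint from the index-$p$ (normal) subgroup $H_1$, and this requires understanding exactly how the central extension can be glued, i.e. classifying the possible $E_4$ up to the relevant equivalence and checking that in each the two potential "small" normal subgroups are forced to overlap. I expect this to come down to showing that $\operatorname{core}H_1 \cap \operatorname{core}H_2 \ne \{1\}$ always holds because $H_1$, being maximal, contains $[E_4,E_4]$ and hence contains the unique minimal normal subgroup that $\operatorname{core}H_2$ is forced to contain — yielding the contradiction and completing the proof that no such exceptional $E_4$ exists.
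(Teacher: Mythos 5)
There is a genuine gap, and it sits right at the start of your argument. From $N\le Z(E_4)$ and $Z(E_4)/N\le Z(G_4)$ you conclude that $|Z(E_4)|$ is $p^2$ or $p^3$; but these containments only give the upper bound $p^3$ — nothing forces $Z(E_4)$ to be larger than $N$ itself, so the case $|Z(E_4)|=p$ is possible and you never treat it. That case is precisely the substance of the paper's proof: there the easy cases are the large-centre ones (if $|Z(E_4)|\ge p^2$, Theorem \ref{Johnson's bound} gives $\mu(E_4)\ge p\mu(Z)\ge 2p^2>\mu(G_4)$ in one line), and the real work is the transitive case $Z(E_4)=\langle n\rangle$, where one must rule out $\mu(E_4)=p^2$ by analysing a core-free subgroup $K$ of order $p^3$: the five isomorphism types of groups of order $p^3$ are run through, $\mathbb{Z}_{p^3}$ and $\mathbb{Z}_{p^2}\times\mathbb{Z}_p$ are excluded since $\mu(K)\le p^2$, the nonabelian types are excluded because $K\times\langle n\rangle\le E_4$ would force $\mu(E_4)\ge p^2+p$, and the elementary abelian type is excluded using Burnside's normal abelian subgroup of order $p^3$. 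None of this appears in your proposal, so the hardest part of the theorem is missing. A secondary error feeds the same omission: your value $\mu(G_4)=2p^2$ is wrong. Since $G_4\cong\langle x,y,z\rangle\times\langle w\rangle$ is the Heisenberg group of order $p^3$ times $\mathbb{Z}_p$, Wright's theorem gives $\mu(G_4)=p^2+p$, which is the value the paper uses; with the correct (smaller) target the inequality you would need is weaker, but the case you skipped still has to be handled, and in it one must show $\mu(E_4)>p^2$, which is exactly the core-free-subgroup analysis above.

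Two further points. Your parenthetical that $E_4$ has exponent $p$ "since $G_4$ does" is false for central extensions (e.g.\ $\mathbb{Z}_{p^2}$ over $\mathbb{Z}_p$), so even the structure of $Z(E_4)$ cannot be assumed elementary abelian without argument. And the cases you do discuss ($|Z(E_4)|=p^2$ or $p^3$) are dispatched instantly by Theorem \ref{Johnson's bound}; the elaborate two-orbit core analysis you sketch for the index pattern $\{p,p^2\}$ — which you yourself flag as unfinished ("I expect this to come down to\ldots") — is unnecessary for those cases and does not substitute for the missing $|Z(E_4)|=p$ analysis.
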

\begin{proof}
Let $E_4$ be an extension of $G_4$ satisfying the conditions of the theorem. First we show that $Z(E_4)\cong\langle n\rangle,$ where $E_4/\langle n\rangle\cong G_4.$ If $p^2\le|Z(E_4)|,$ then by Theorem \ref{Johnson's bound}, we have $\mu(E_4)\ge 2p^2>p^2+p=\mu(G_4),$ contradicting that $E_4$ is exceptional. Hence we must have $|Z(E_4)|=p$ and thus $Z(E_4)=\langle n\rangle.$ Hence the permutation representation is transitive with $\mu(E_4)=|E_4:K|$ for some subgroup $K\le E_4.$ Clearly we must have $\mu(E_4)>p.$ We must show that $\mu(E_4)> p^2.$ Suppose not, so that $\mu(E_4)=p^2.$ Then $\mu(E_4)=|E_4:K|,$ where $K$ is a core-free subgroup of order $p^3$ in $E_4.$ We have from 4.4 in ~\cite{Hall} that $K$ is isomorphic to either one of the following $5$ groups: 
\begin{align*}
&\mathbb{Z}_{p^3},\  \mathbb{Z}_p^2\times\mathbb{Z}_p,\  \mathbb{Z}_p\times\mathbb{Z}_p\times\mathbb{Z}_p,\ H=\langle x,y\; |\; x^{p^2}=y^p=1,\; [x,y]=x^p\rangle\;\text{or}\\
&L=\langle x,y,z\; |\; x^p=y^p=z^p=1,\; [x,y]=z,\; [x,z]=[y,z]=1\rangle. 
\end{align*}
Since $\mu(K)\le\mu(E_4)=p^2,$ it follows that $K$ cannot be isomorphic to $\mathbb{Z}_{p^3}$ or $\mathbb{Z}_p^2\times\mathbb{Z}_p.$ If $K$ is isomorphic to either $H$ or $L$, then since $K$ is a core-free subgroup of $E_4,$ it follows that both $H$ and $L$ intersects trivially with $\langle n\rangle$ so that $H\times\langle n\rangle$ and $L\times\langle n\rangle$ are internal direct products. Now 
$$\mu(H\times\langle n\rangle)=\mu(L\times\langle n\rangle)=p^2+p\le\mu(E_4)=p^2,$$
a contradiction. So we are left with the remaining case that $K\cong\mathbb{Z}_p\times\mathbb{Z}_p\times\mathbb{Z}_p.$ 
By a result from ~\cite{Burnside}, there exists an abelian normal subgroup $B$ of order $p^3$ in $E_4$. But since $K$ does not contain any non-trivial normal subgroups of $E_4,$ we have $K\cap B=\{1\},$ so we may form the semidirect product $B\rtimes K.$ Moreover, $B\rtimes K$ is a subgroup of $E_4.$ But then $|B\rtimes K|\ge p^6>p^5=|E_4|,$ a contradiction. The proof is now complete.   
\end{proof}


\begin{thebibliography}{3}

\bibitem{Johnson} D.L. Johnson, \emph{Minimal permutation representations of finite groups}, American Journal of Mathematics \textbf{93} (1971), 857-866.

\bibitem{Wright} D. Wright, \emph{Degrees of Minimal Embeddings for Some Direct Products}, American Journal of Mathematics \textbf{79} (1975), 897-903.

\bibitem{Lemiuex} S. Lemieux, \emph{Finite Exceptional p-Groups of Small Order}, Communications in Algebra \textbf{35} (2007), 1890-1894.

\bibitem{Easdown & Praeger} D. Easdown and C.E. Praeger, \emph{On minimal faithful permutation representations of finite groups}, Bulletin of the Australian Mathematical Society \textbf{38} (1988), 207-220.

\bibitem{Lemiuex's thesis} S. Lemiuex, \emph{On minimal faithful permutation representations of finite groups}, M.Sc. thesis, Carlton University (1999)

\bibitem{Neumann} P.M. Neumann, \emph{Some algorithms for computing with finite permutation groups}, Proceedings of groups- St Andrews (1986)

\bibitem{Burnside} W. Burnside, \emph{Theory of groups of finite order}, Dover Publications, Inc., New York (1955)

\bibitem{Hall} M. Hall, \emph{Theory of groups}, The Macmillan Company (1959)











\end{thebibliography}
\end{document}